\newtheorem{thm}{Theorem}
\newtheorem{lem}{Lemma}
\newtheorem{rem}{Remark}
\newtheorem{algorithm}{Algorithm}
\newcommand{\norm}[1]{\left\Vert#1\right\Vert}
\newcommand{\abs}[1]{\left\vert#1\right\vert}
\newcommand{\To}{\rightarrow}
\newcommand{\bsgamma}{\boldsymbol{\gamma}}
\newcommand{\bsx}{\boldsymbol{x}}
\newcommand{\bsp}{\boldsymbol{p}}
\newcommand{\bsm}{\boldsymbol{m}}
\newcommand{\bsy}{\boldsymbol{y}}
\newcommand{\bssigma}{\boldsymbol{\sigma}}
\newcommand{\cH}{{\cal H}}
\newcommand{\bsone}{\boldsymbol{1}}
\newcommand{\rd}{\,\mathrm{d}}
\newcommand{\NN}{\mathbb{N}}
\newcommand{\ZZ}{\mathbb{Z}}
\newcommand{\RR}{\mathbb{R}}
\newcommand{\PP}{\mathbb{P}}
\newcommand{\QQ}{\mathbb{Q}}
\newcommand{\uu}{\mathfrak{u}}
\newcommand{\simi}{\mathrm{mid}}
\newcommand{\simp}{\mathrm{smp}}
\newcommand{\EXCLUDE}[1]{}
\newcommand{\rdots}{\mathinner{\mkern1mu\lower-1\p@\vbox{\kern7\p@\hbox{.}}
\mkern2mu \raise4\p@\hbox{.}\mkern2mu\raise7\p@\hbox{.}\mkern1mu}}
\begin{document}
\title{Component-by-component construction of shifted Halton sequences}

\author{Peter Kritzer and Friedrich Pillichshammer\thanks{The authors are supported by the
Austrian Science Fund (FWF): Projects F5506-N26 (Kritzer)  and
F5509-N26 (Pillichshammer), respectively, which are part of the
Special Research Program "Quasi-Monte Carlo Methods: Theory and
Applications".}}

\date{}

\maketitle

\begin{center}
Dedicated to H. Niederreiter on the occasion of his 70th
birthday.
\end{center}

\begin{abstract}
We study quasi-Monte Carlo integration in a weighted anchored
Sobolev space. As the underlying integration nodes we consider Halton
sequences in prime bases $\bsp=(p_1,\ldots,p_s)$ which are shifted
with a $\bsp$-adic shift based on $\bsp$-adic arithmetic.
The error is studied in the worst-case setting. In a recent paper,
Hellekalek together with the authors of this article proved
optimal error bounds in the root mean square sense, where the mean was extended over
the uncountable set of all possible $\bsp$-adic shifts. Here we show that
candidates for good shifts can in fact be chosen from a finite set and can be
found by a component-by-component algorithm.
\end{abstract}

\noindent\textbf{Keywords:} Quasi-Monte Carlo integration, shifted
Halton sequences, worst-case error.

\noindent\textbf{MSC:} 65D30, 65C05, 11K38, 11K45.

\section{Introduction}

We study the problem of approximating the value of the integral
$I_s(f):=\int_{[0,1]^s}f(\bsx)\rd\bsx$ of functions $f$ belonging
to a reproducing kernel Hilbert space $\cH(K)$ of functions
$[0,1]^s\rightarrow\RR$. One way of numerically approximating
$I_s(f)$ is to employ a quasi-Monte Carlo (QMC) rule,
\[Q_{N,s}(f):=\frac{1}{N}\sum_{n=0}^{N-1}f(\bsx_n),\]
where $P_{N,s}=\{\bsx_0,\bsx_1,\ldots,\bsx_{N-1}\}$ is a set of
$N$ deterministically chosen points in $[0,1)^s$. It is well known
(see, e.g., \cite{DP10,DT97,KN74,N92,SJ94}) that point sets which
are in some way evenly distributed in the unit cube yield a low
integration error when applying a QMC rule for approximating
$I_s(f)$.

We study the error of QMC rules in the worst-case setting. The
worst-case error of an algorithm $Q_{N,s}$ based on nodes
$P_{N,s}$ is defined as the worst integration error over the
unit-ball of $\cH(K)$, i.e., $$e_{N,s}(P_{N,s},K)=\sup_{f \in
\cH(K) \atop \|f\|_{K} \le 1}|I_s(f)-Q_{N,s}(f)|.$$ An essential
question in the theory of QMC methods is how the sample nodes
$P_{N,s}$ of a QMC rule $Q_{N,s}$ should be chosen.

\paragraph{Shifted Halton sequences.}
In this paper we focus on a special kind of point sequences
underlying a QMC rule, namely Halton sequences (cf.~\cite{H60})
whose definition is based on the radical inverse function. Let $p
\ge 2$ be an integer, $\NN=\{1,2,3,\ldots\}$, and $\NN_0=\NN \cup
\{0\}$. For $n\in\NN_0$, let $n=n_0 + n_1 p + n_2 p^2+\cdots$ be
the base $p$ expansion of $n$ (which is of course finite) with
digits $n_i\in\{0,1,\ldots,p-1\}$ for $i\ge 0$. The radical
inverse function $\phi_p:\NN_0\To [0,1)$ in base $p$ is defined by
\[\phi_p (n):=\sum_{r=0}^{\infty} \frac{n_{r}}{p^{r+1}}.\]
Halton sequences can be defined for any dimension
$s\in\NN$. Let $p_1,\ldots,p_s \ge 2$ be $s$ integers, and let
$\bsp=(p_1,\ldots,p_s)$. Then the $s$-dimensional Halton sequence
$H_{\bsp}$ in bases $p_1,\ldots,p_s$ is defined to be the sequence
$H_{\bsp}=(\bsx_n)_{n\ge 0}\subseteq [0,1)^s$, where
\[\bsx_n=(\phi_{p_1}(n),\phi_{p_2}(n),\ldots,\phi_{p_s}(n)),\ \ \ \mbox{ for } \ n\in \NN_0.\]
It is well known (see, e.g.,~\cite{DP10,N92}) that Halton
sequences have good distribution properties if and only if the
bases $p_1,\ldots,p_s$ are mutually relatively prime, and for the
sake of simplicity we assume throughout the rest of the paper that
$\bsp=(p_1,\ldots,p_s)$ consists of $s$ mutually different prime
numbers.

We also introduce a method of randomizing the elements of the
Halton sequence which is referred to as a $\bsp$-adic shift. This
special case of randomization is based on arithmetic over the
$p$-adic numbers and is perfectly suited for Halton sequences
$H_{\bsp}$.

Let $p$ be a prime number. We define the set of $p$-adic numbers
as the set of formal sums
\begin{equation*}
\mathbb{Z}_p = \left\{z = \sum_{r=0}^\infty z_r p^r\, : \, z_r \in
\{0,1,\ldots,p-1\} \mbox{ for all } r \in \NN_0\right\}.
\end{equation*}
Clearly $\mathbb{N}_0 \subseteq \mathbb{Z}_p$. For two nonnegative
integers $y, z \in \mathbb{N}_0 \subseteq \mathbb{Z}_p$, the sum
$y+z \in \mathbb{Z}_p$ is defined as the usual sum of integers.
The addition can be extended to all $p$-adic numbers. The set
$\mathbb{Z}_p$ with this addition, which we denote by $+_{\ZZ_p}$,
then forms an abelian group.

As an extension of the radical inverse function defined above, we
define the so-called Monna map
\begin{equation*}
\phi_p:\mathbb{Z}_p \to [0,1)\ \ \mbox{by}\ \ \phi_p(z): =
\sum_{r=0}^\infty \frac{z_r}{p^{r+1}} \pmod{1}
\end{equation*}
whose restriction to $\NN_0$ is exactly the radical inverse
function in base $p$. In order to keep the used notation at a
minimum we denote both, the Monna map and the radical inverse
function, by $\phi_p$. We also define the inverse
\begin{equation*}
\phi_p^+: [0,1)\to \mathbb{Z}_p\ \ \mbox{by}\ \
\phi_p^+\left(\sum_{r=0}^\infty \frac{x_r}{p^{r+1}}\right) :=
\sum_{r=0}^\infty x_r p^r,
\end{equation*}
where we always use the finite $p$-adic representation for
$p$-adic rationals in $[0,1)$. By a $p$-adic rational, we understand a number 
in $[0,1)$ that can be represented by a finite $p$-adic expansion. 

For a prime number $p$ and for $x \in [0,1)$ we consider the
following $p$-adic shifts:
\begin{itemize}
\item {\bf $p$-adic shift:} for $\sigma \in [0,1)$, we define
$x\oplus_{p}\sigma\in [0,1)$ to be
\[x\oplus_{p}\sigma=\phi_{p} (\phi_{p}^+ (x) +_{\ZZ_p}
\phi_{p}^+ (\sigma)).\] 
\item {\bf simplified $p$-adic shift:} for
$m\in\NN$ and $\sigma \in [0,1)$, we write $x\oplus_{p,m}^{\simp}
\sigma$ to be the truncation of $x\oplus_p \sigma$ to the
$m$ most significant digits, i.e., if $\phi_{p}^+ (x) +_{\ZZ_p}
\phi_{p}^+ \sigma =\sum_{r=1}^\infty y_r p^{r-1} \in \ZZ_p$, then
$$x\oplus_{p,m}^{\simp} \sigma= \phi_p\left(\sum_{r=1}^m y_r
p^{r-1}\right).$$ 
\item {\bf mid-simplified $p$-adic shift:} for
$m\in\NN$ and $\sigma \in [0,1)$, we write $$x
\oplus_{p,m}^{\simi} \sigma  = (x\oplus_{p,m}^{\simp} \sigma)+
\frac{1}{2p^m}.$$
\end{itemize}
If the choice of $m$ is clear from the context, we may often omit
$m$ in the notation $\oplus_{p,m}^{\simp}$ and
$\oplus_{p,m}^{\simi}$ and write $\oplus_{p}^{\simp}$ and
$\oplus_{p}^{\simi}$ instead.

In the $s$-variate case, for given bases $\bsp=(p_1,\ldots,p_s)$,
a point $\bsx=(x_1,\ldots,x_s)\in [0,1)^s$, and given
$\bssigma=(\sigma_1,\ldots,\sigma_s)\in [0,1)^s$ and
$\bsm=(m_1,\ldots,m_s) \in \NN^s$, the above shifts are defined
component-wise and we write $\bsx\oplus_{\bsp}\bssigma\in
[0,1)^s$, $\bsx\oplus_{\bsp,\bsm}^{\simp} \bssigma$ and $\bsx
\oplus_{\bsp,\bsm}^{\simi} \bssigma$, respectively.

For a point set $Y=\{\bsy_n \ : \ n=0,\ldots ,N-1\}$ we write
$$Y\oplus \bssigma:=\{\bsy_n \oplus \bssigma \ : \ n=0,\ldots
,N-1\}\ \ \mbox{ where } \ \oplus \mbox{ is either }
\oplus_{\bsp},\oplus_{\bsp,\bsm}^{\simp},\mbox{ or }\oplus_{\bsp,\bsm}^{\simi}.$$

\paragraph{A weighted Sobolev space.} In this paper, we are going to consider the problem of numerical
integration of functions $f$ that belong to a weighted anchored
Sobolev space. Before we give the definition we introduce some
notation which we require for the following: assume that
$\bsgamma=(\gamma_j)_{j=1}^{\infty}$ is a non-increasing sequence
of positive weights, where $1\ge \gamma_1\ge\gamma_2\ge \cdots$.
These weights are used in order to model the influence of the
different variables of the integrands, an idea which was
introduced by Sloan and Wo\'{z}niakowski \cite{SW98}. For $s \in
\NN$ let $[s]:=\{1,\ldots,s\}$. For $\uu\subseteq [s]$,
$\bsx_{\uu}$ denotes the projection of $\bsx\in [0,1]^s$ onto
$[0,1]^{|\uu|}$ consisting of the components whose indices are
contained in $\uu$. Furthermore we write $(\bsx_{\uu},\bsone) \in
[0,1]^s$ for the point where those components of $\bsx$ whose
indices are not in $\uu$ are replaced by 1.

We consider a weighted anchored Sobolev space $\cH
(K_{s,\bsgamma})$ with anchor $\bsone=(1,1,\ldots,1)$ consisting
of functions on $[0,1]^s$ whose first mixed partial derivatives
are square integrable. This space is a reproducing kernel Hilbert
space with kernel function
\begin{equation}\label{ker_sob_space}
K_{s,\bsgamma}(\bsx,\bsy)=\prod_{j=1}^s (1+\gamma_j \min
(1-x_j,1-y_j)) \ \ \mbox{ for } \bsx,\bsy \in [0,1]^s,
\end{equation}
where $\bsx=(x_1,x_2,\ldots,x_s)$ and $\bsy=(y_1,y_2,\ldots,y_s)$.
The inner product is given by
\[\langle f,g \rangle_{K_{s,\bsgamma}}=\sum_{\uu\subseteq [s]}\gamma_{\uu}^{-1} \int_{[0,1]^{\abs{\uu}}}
  \frac{\partial^{\abs{\uu}}}{\partial \bsx_{\uu}} f(\bsx_{\uu},\bsone)\frac{\partial^{\abs{\uu}}}{\partial \bsx_{\uu}}
g(\bsx_{\uu},\bsone)\rd \bsx_{\uu}.\] Here
$\gamma_{\uu}=\prod_{j\in\uu}\gamma_j$; in particular
$\gamma_{\emptyset}=1$. Furthermore, we denote by
$\frac{\partial^{\abs{\uu}}}{\partial \bsx_{\uu}} h$ the
derivative of a function $h$ with respect to the $x_j$ with
$j\in\uu$. The norm in $\cH (K_{s,\bsgamma})$ is given by
$\norm{f}_{K_{s,\bsgamma}}=\sqrt{\langle
f,f\rangle_{K_{s,\bsgamma}}}$. The Sobolev space $\cH
(K_{s,\bsgamma})$ has been studied frequently in the literature
(see, among many references,
e.g.~\cite{DKPS05, DP05, HKP14, KP11, K03, NW08, SW98, wang}).\\

It is well known that the squared worst-case integration error in
a reproducing kernel Hilbert space can be expressed in terms of
the kernel function. In the particular case of the kernel
$K_{s,\bsgamma}$, it is easily derived with the help of
\cite[Proposition~2.11]{DP10} that for
$P_{N,s}=\{\bsx_0,\ldots,\bsx_{N-1}\}$ in $[0,1)^s$, where
$\bsx_n=(x_{n,1},\ldots,x_{n,s})$ for $n=0,1,\ldots,N-1$, we have
\begin{eqnarray}\label{fo_wc_err}
e^2_{N,s}(P_{N,s},K_{s,\bsgamma}) & = &
\prod_{i=1}^s\left(1+\frac{\gamma_i}{3}\right)-\frac{2}{N}
\sum_{n=0}^{N-1}\prod_{i=1}^s\left(1+\frac{\gamma_i}{2}(1-x_{n,i}^2)\right)\nonumber\\
&& \mbox{}+\frac{1}{N^2}\sum_{n,h=0}^{N-1}\prod_{i=1}^s
\left(1+\gamma_i\min(1-x_{n,i},1-x_{h,i})\right).
\end{eqnarray}
Hence the worst-case error can be computed at a cost of $O(s N^2)$ arithmetic operations.\\

In \cite{HKP14} the authors studied the root mean square
worst-case error in $\cH(K_{s,\bsgamma})$ of the $\bsp$-adic
shifted Halton sequence extended over all $\bsp$-adic shifts,
i.e.,
\[\widehat{e}_{N,s}(H_{\bsp},K_{s,\bsgamma}):=\sqrt{\mathbb{E}_{\bssigma}
[e^2_{N,s}(H_{\bsp} \oplus_{\bsp} \bssigma,K_{s,\bsgamma})]}.\]
The following result is the main result of~\cite{HKP14}.

\begin{thm}[{\cite[Theorem~1]{HKP14}}]\label{thmerrsob}
Let $N \ge 2$. We have
\begin{eqnarray}\label{rmsbd}
[\widehat{e}_{N,s}(H_{\bsp},K_{s,\bsgamma})]^2 \le
\frac{1}{N^2}\left[\prod_{j=1}^s \left(1+\gamma_j (\log N)
\frac{p_j^2}{\log p_j}\right)+
\prod_{j=1}^s\left(1+\frac{\gamma_j}{2}\right) \prod_{j=1}^s
\left(1+\frac{\gamma_j p_j}{6}\right)\right].
\end{eqnarray}
In particular, if $\sum_{j = 1}^\infty \gamma_j \frac{p_j^2}{\log
p_j} < \infty$, then for any $\delta >0$ we have
$$\widehat{e}_{N,s}(H_{\bsp},K_{s,\bsgamma}) \ll_{\delta,\bsgamma,\bsp} \frac{1}{N^{1-\delta}},$$
where the implied constant is independent of the dimension $s$.
\end{thm}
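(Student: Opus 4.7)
The plan is to start from the explicit formula~(\ref{fo_wc_err}) with the shifted points $\bsx_n\oplus_{\bsp}\bssigma$ replacing $\bsx_n$ and to take the expectation over $\bssigma$ drawn uniformly from $[0,1)^s$. Because the shift acts componentwise and the $\sigma_j$ are independent, every resulting integral factorizes over the coordinates. The crucial structural fact is that for each fixed $x\in[0,1)$ the map $\sigma\mapsto x\oplus_{p}\sigma$ is measure-preserving on $[0,1)$, being the Monna-map image of translation by a Haar-uniform element on the compact group $\ZZ_p$. Consequently $x_{n,j}\oplus_{p_j}\sigma_j$ is uniform on $[0,1)$, which gives $\mathbb{E}_{\sigma_j}[1+(\gamma_j/2)(1-(x_{n,j}\oplus\sigma_j)^2)]=1+\gamma_j/3$, so the first two terms of~(\ref{fo_wc_err}) contribute in expectation
\[\prod_{j=1}^s(1+\gamma_j/3)-2\prod_{j=1}^s(1+\gamma_j/3)=-\prod_{j=1}^s(1+\gamma_j/3).\]

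For the double sum I would again use independence to factorize and then, after the measure-preserving substitution $u=\phi_{p_j}(n)\oplus_{p_j}\sigma_j$, reduce each coordinate factor to the one-dimensional integral
\[F_{p_j}(c_j(n,h)):=\int_0^1\min\bigl(1-u,\,1-u\oplus_{p_j}\phi_{p_j}(c_j(n,h))\bigr)\,du,\]
where $c_j(n,h)\in\ZZ_{p_j}$ is the $p_j$-adic difference of $n$ and $h$. The diagonal $n=h$ gives $F_{p_j}(0)=1/2$ and contributes $\frac{1}{N}\prod_j(1+\gamma_j/2)$. For the off-diagonal part I expand $F_p(c)$ in the base-$p$ Walsh system,
\[F_p(c)=\tfrac{1}{3}+\sum_{k\ge 1}r_p(k)\,\wal_k(\phi_p(c)),\]
with coefficients $r_p(k)$ that decay geometrically in the base-$p$ digit length of $k$, and then expand the outer product $\prod_j(1+\gamma_j F_{p_j}(c_j))$. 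Grouping by the support $\uu\subseteq[s]$ of active frequencies isolates the all-zero-frequency term $\frac{N(N-1)}{N^2}\prod_j(1+\gamma_j/3)$, which combines with the contribution of the first paragraph and the diagonal term to leave a residual of order $1/N$ that is absorbed into the final bound.

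The main obstacle is estimating the remaining character sums. After the Walsh expansion, the off-diagonal contribution is a sum over nonzero multi-indices $\bsk$ of a product of Walsh coefficients times
\[\sum_{\substack{n,h=0\\ n\ne h}}^{N-1}\prod_{j\in\mathrm{supp}(\bsk)}\wal_{k_j}(\phi_{p_j}(c_j(n,h)))=\Bigl|\sum_{n=0}^{N-1}\prod_{j\in\mathrm{supp}(\bsk)}\wal_{k_j}(\phi_{p_j}(n))\Bigr|^{2}-N,\]
where the equality uses the character property of the Walsh system in each base. The classical digit-reversal distribution of the Halton sequence, together with coprimality of the bases $p_1,\dots,p_s$, yields uniform bounds on these Weyl-type sums provided the frequency variables $k_j$ are truncated at the natural level $\lfloor\log_{p_j}N\rfloor$ in each coordinate. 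Summing the magnitudes $|r_{p_j}(k_j)|$ up to that level produces a factor of order $(\log N)\,p_j^2/\log p_j$ per active dimension, which after reassembly yields the first product $\prod_j(1+\gamma_j(\log N)p_j^2/\log p_j)/N^2$ in~(\ref{rmsbd}), while the high-frequency tail together with the $O(1/N)$ residuals combines into the second product $\prod_j(1+\gamma_j/2)\prod_j(1+\gamma_jp_j/6)/N^2$. The delicate point is to carry out the frequency decomposition in a strictly dimensionwise manner, so that the final estimate retains the product structure over $j$ appearing on the right-hand side of~(\ref{rmsbd}).
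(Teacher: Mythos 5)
A preliminary remark: Theorem~\ref{thmerrsob} is stated in this paper purely as a citation of \cite[Theorem~1]{HKP14}; the paper contains no proof of it. I can therefore only measure your argument against the fragments of the original proof that Section~\ref{sec_CBC} reuses, namely the character expansion $I_2=\sum_{\ell\ge0}r_{p,\gamma}(\ell)\beta_\ell(h_n)\overline{\beta_\ell(h_k)}$ and the bound on $\sum_{\ell\ge1}r_{p,\gamma}(\ell)\bigl|N^{-1}\sum_n\beta_\ell(h_n)\bigr|^2$ obtained there with the cutoff $g=\lfloor2\log_pN\rfloor$. Your overall strategy --- average \eqref{fo_wc_err} over the shift using that $\sigma\mapsto x\oplus_p\sigma$ is measure preserving, reduce the double sum to a function of the $p$-adic difference, expand in base-$p$ characters, and control the resulting Halton Weyl sums with a frequency cutoff --- is the same one, and your treatment of the first two terms of \eqref{fo_wc_err} is correct.

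There are, however, two genuine gaps in the bookkeeping, both producing terms of order $1/N$ that cannot be ``absorbed'' into the right-hand side of \eqref{rmsbd}, which is $O((\log N)^s N^{-2})$ for fixed $s$. First, by splitting off the diagonal $n=h$ as $\frac1N\prod_j(1+\gamma_j/2)$ and cancelling only the zero-frequency terms, you are left with the strictly positive residual $\frac1N\bigl[\prod_j(1+\gamma_j/2)-\prod_j(1+\gamma_j/3)\bigr]=\Theta(1/N)$; this cannot be swept into the final bound, it must cancel exactly. The cancellation is in fact already on your page: the $-N$ in your identity $\sum_{n\ne h}\prod_j\wal_{k_j}=\bigl|\sum_n\prod_j\wal_{k_j}\bigr|^2-N$, multiplied by $\prod_jr_{p_j}(k_j)/N^2$ and summed over $\bsk\ne\bszero$, contributes exactly $-\frac1N\bigl[\prod_j(1+\gamma_j/2)-\prod_j(1+\gamma_j/3)\bigr]$, but you never carry this out. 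Cleaner still is not to split off the diagonal at all: expanding the full double sum gives $\sum_{\bsk}\prod_jr_{p_j}(k_j)|S_N(\bsk)|^2$ with $S_N(\bsk)=N^{-1}\sum_n\prod_j\beta_{k_j}(\phi_{p_j}(n))$, the $\bsk=\bszero$ term cancels $-\prod_j(1+\gamma_j/3)$, and everything that remains is a sum of nonnegative terms to be estimated. Second, your cutoff $\lfloor\log_{p_j}N\rfloor$ is too low: the tail $\sum_{k\ge p_j^{g_j}}r_{p_j,\gamma_j}(k)$ is of order $\gamma_jp_j/(6p_j^{g_j})$, so with $p_j^{g_j}\approx N$ the trivial estimate of the high frequencies is again $\Theta(1/N)$. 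One needs $g_j=\lfloor2\log_{p_j}N\rfloor$, as in the bound quoted in Section~\ref{sec_CBC}, so that $p_j^{g_j}\gtrsim N^2/p_j$ and the tail becomes $O(\gamma_jp_j/N^2)$; this is precisely what produces the factors $1+\gamma_jp_j/6$ in \eqref{rmsbd}. With these two corrections your outline matches the intended proof.
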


The bound \eqref{rmsbd} is, up to $\log$-factors, optimal. For a
further discussion of the result, especially with respect to the
dependence on the dimension $s$ we refer to \cite{HKP14}.
Theorem~\ref{thmerrsob} can also be interpreted in the
``deterministic'' sense that for every fixed $N \ge 2$ there
exists a $\bsp$-adic shift $\bssigma \in [0,1)^s$ such that the
squared worst-case error of the initial $N$ elements of the
corresponding $\bsp$-adically shifted Halton sequence satisfies
the bound \eqref{rmsbd}. The problem with this interpretation is
that the $\bsp$-adic shift has to be chosen from an uncountable
set, namely the $s$-dimensional unit cube. This is a big drawback
if one wants to effectively find good $\bsp$-adic shifts.

It is the aim of this short paper to show that it suffices to choose the $\bsp$-adic shifts, 
which yield an upper bound of the form \eqref{rmsbd}, from a finite set. 
This set of possible candidates has size $N^s$ which is of course huge already 
for moderately large $s$ or $N$. However we also show, that in principle good shifts 
can be found by a component-by-component (CBC) algorithm. This idea is borrowed from the 
construction of good lattice point sets which goes back to Korobov~\cite{Kor_book} 
and to Sloan and Reztsov~\cite{SR}, and which is nowadays used in a multitude of papers. 
With this ``adaptive search'' the search space is only of a size of order $O(sN)$. 

The rest of the paper is structured as follows: In
Section~\ref{sec_aux} we prove some auxiliary results. The
CBC construction of $\bsp$-adic shifts as well as the statement and
proof of the main result of this paper are presented in
Section~\ref{sec_CBC}.

\section{Auxiliary results}\label{sec_aux}

We use the following notation: for $p \in \NN$ and $m \in \NN_0$
let $$\QQ(p^m):=\{a p^{-m} \ : \ a=0,1,\ldots,p^m-1\}.$$

We now show the following lemma.
\begin{lem}\label{lemsimid1}
Let $H_{p,N}$ be the point set consisting of the first $N$
elements of $H_p$ and let $m\in\NN$ be minimal such that $N<p^m$.
Furthermore, let $\sigma_m \in \QQ(p^m)$. Then it is true that
$$e^2_{N,1}(H_{p,N}\oplus_p ^{\simi}\sigma_m,K_{1,\gamma_1})\le p^m \int_{0}^{p^{-m}}
e^2_{N,1}(H_{p,N}\oplus_p(\sigma_m+\delta),K_{1,\gamma_1})\rd\delta.$$
\end{lem}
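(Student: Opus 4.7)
My plan is to use formula \eqref{fo_wc_err}, which in the one-dimensional case reduces after simplification to
\begin{equation*}
e^2_{N,1}(P, K_{1,\gamma_1}) \;=\; -\tfrac{2\gamma_1}{3} + \tfrac{\gamma_1}{N}\sum_{n=0}^{N-1} x_n^2 + \tfrac{\gamma_1}{N^2}\sum_{n,h=0}^{N-1}\min(1-x_n, 1-x_h),
\end{equation*}
and to compare the two sides of the lemma one summand at a time. Set $u_n := x_n \oplus_p^{\simp}\sigma_m$, so that $x_n \oplus_p^{\simi}\sigma_m = u_n + \tfrac{1}{2p^m}$. Since $\phi_p^+(x_n) = n$ and $\phi_p^+(\sigma_m) =: s \in \{0,\dots,p^m-1\}$, the first $m$ $p$-adic digits of $\phi_p^+(x_n) +_{\ZZ_p} \phi_p^+(\sigma_m)$ encode the integer $(n+s) \bmod p^m$, so $u_n = \phi_p((n+s)\bmod p^m)$; because $N < p^m$, these are pairwise distinct elements of $\QQ(p^m)$, whence $|u_n - u_h| \ge p^{-m}$ for $n \ne h$. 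Since $\phi_p^+(\delta) \in p^m \ZZ_p$ for $\delta \in [0, p^{-m})$, the first $m$ digits of the triple $\ZZ_p$-sum are also unaffected by $\delta$, so
\begin{equation*}
y_n(\delta) \;:=\; x_n \oplus_p (\sigma_m+\delta) \;=\; u_n + z_n(\delta), \qquad z_n(\delta) \in [0, p^{-m}),
\end{equation*}
where for each fixed $n$ the map $\delta \mapsto z_n(\delta)$ is measure-preserving on $[0,p^{-m})$ (a translation on $p^m \ZZ_p$, composed with measure-preserving Monna maps).

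With this setup in hand, I would handle the two variable terms of $e^2_{N,1}$ separately. For the quadratic sum, an elementary calculation using $p^m \int_0^{p^{-m}} z_n(\delta)\rd\delta = \tfrac{1}{2p^m}$ and $p^m \int_0^{p^{-m}} z_n(\delta)^2 \rd\delta = \tfrac{1}{3p^{2m}}$ yields, for each $n$,
\begin{equation*}
p^m \int_0^{p^{-m}} (u_n + z_n(\delta))^2 \rd\delta \;-\; \bigl(u_n + \tfrac{1}{2p^m}\bigr)^2 \;=\; \tfrac{1}{12 p^{2m}} \;\ge\; 0,
\end{equation*}
so the average dominates the midpoint value. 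For the pairwise minimum the key observation is that the separation $|u_n - u_h| \ge p^{-m}$ together with $z_n(\delta), z_h(\delta) \in [0, p^{-m})$ forces $\sgn(y_n(\delta) - y_h(\delta)) = \sgn(u_n - u_h)$ uniformly in $\delta$. Hence $\min(1 - y_n(\delta), 1 - y_h(\delta)) = 1 - \max(u_n, u_h) - z_{n^*}(\delta)$, where $n^*$ is the index attaining $\max(u_n, u_h)$; averaging over $\delta$ gives exactly $1 - \max(u_n, u_h) - \tfrac{1}{2p^m}$, which coincides with the midpoint value $\min(1 - u_n - \tfrac{1}{2p^m}, 1 - u_h - \tfrac{1}{2p^m})$ term by term. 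The diagonal case $n = h$ is immediate.

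Summing the two comparisons, the constant $-2\gamma_1/3$ cancels, the double sum contributes nothing to the difference, and the quadratic sum contributes $\gamma_1/(12 p^{2m}) \ge 0$, which gives the stated inequality (in fact with a small quantitative improvement). The one genuinely subtle point is the exact cancellation in the double sum; it hinges on the pairwise separation of the $u_n$ matching the length of the interval over which $\delta$ varies. This is what distinguishes the argument from a direct Jensen-type comparison, since $e^2_{N,1}$ is not jointly convex in the shifted points (the $\min$-terms contribute a concave piece), so a naive convexity argument is unavailable.
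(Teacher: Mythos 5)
Your proposal is correct and follows essentially the same route as the paper's proof: the decomposition $h_n\oplus_p(\sigma_m+\delta)=(h_n\oplus_p^{\simp}\sigma_m)+z_n(\delta)$ with $z_n(\delta)\in[0,p^{-m})$, the measure-preservation of the $p$-adic shift (Lemma~3 of \cite{HKP14}), and the observation that the $p^{-m}$-separation of the truncated points fixes which argument attains the minimum uniformly in $\delta$. The only difference is organizational: by simplifying \eqref{fo_wc_err} first you see that the min-terms and the diagonal match exactly and the whole inequality comes from the quadratic term with surplus $\gamma_1/(12p^{2m})$, which is marginally sharper than the paper's term-by-term estimate but not a different argument.
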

\begin{proof}
Let $H_{p,N}=\{h_0,h_1,\ldots,h_{N-1}\}$. From \eqref{fo_wc_err}
we obtain
\begin{eqnarray*}
\lefteqn{p^m \int_{0}^{p^{-m}}
e^2_{N,1}(H_{p,N}\oplus_p(\sigma_m+\delta),K_{1,\gamma_1})\rd\delta= \left(1+\frac{\gamma_1}{3}\right)}\\
&&-\frac{2}{N}\sum_{n=0}^{N-1}p^m \int_{0}^{p^{-m}} 1+\frac{\gamma_1}{2}\left(1-(h_n\oplus_p (\sigma_m +\delta))^2\right)\rd \delta\\
&&+\frac{1}{N^2}\sum_{n=0}^{N-1}p^m \int_{0}^{p^{-m}} 1+\gamma_1\left(1-(h_n\oplus_p (\sigma_m +\delta))\right)\rd \delta\\
&&+\frac{1}{N^2}\sum_{\substack{n,k=0\\n\neq k}}^{N-1}p^m
\int_{0}^{p^{-m}} 1+\gamma_1 \min\left\{1-(h_n\oplus_p (\sigma_m
+\delta)),1-(h_k\oplus_p (\sigma_m +\delta))\right\}\rd \delta.
\end{eqnarray*}
For given $n\in\{0,1,\ldots,N-1\}$, let us now analyze the
quantity
$$h_n\oplus_p (\sigma_m +\delta)=\phi_{p}(\phi_p^+ (h_n) +_{\ZZ_p}\phi_p^+ (\sigma_m +\delta)).$$
The base $p$ expansion of $h_n$ is of the form
$h_n=\sum_{r=1}^{m} \frac{h_n^{(r)}}{p^r}$, since $N< p^m$.
Furthermore, the base $p$ expansions of $\sigma_m$ and $\delta$,
respectively, are of the form
$$\sigma_m=\sum_{r=1}^{m} \frac{\sigma^{(r)}}{p^r} \ \ \ \mbox{ and }\ \ \ \delta=\sum_{r=m+1}^\infty \frac{\delta^{(r)}}{p^r},$$
due to the assumptions on $\sigma_m$ and $\delta$. Consequently,
$$\phi_p^+ (h_n)=\sum_{r=1}^{m} h_n^{(r)} p^{r-1}\quad\mbox{and}\quad
  \phi_p^+ (\sigma_m +\delta)=\phi_p^+ (\sigma_m) +_{\ZZ_p} \phi_p^+ (\delta)=
  \sum_{r=1}^{m} \sigma^{(r)} p^{r-1} +_{\ZZ_p} \sum_{r=m+1}^\infty \delta^{(r)} p^{r-1}.$$
Let $$\phi_p^+ (h_n) +_{\ZZ_p}\phi_p^+ (\sigma_m)=
\sum_{r=1}^{m+1} y_r p^{r-1}$$ with $y_r \in \{0,1,\ldots,p-1\}$.
Then we obtain
\begin{eqnarray*}
\phi_p^+ (h_n) +_{\ZZ_p}\phi_p^+ (\sigma_m
+\delta)&=&\sum_{r=1}^{m} y_r p^{r-1} +_{\ZZ_p} y_{m+1} p^m
+_{\ZZ_p}\phi_p^+(\delta),
\end{eqnarray*}
Note that $\sum_{r=1}^{m} y_r p^{r-1}$ is the truncation of the
$p$-adic sum $\phi_p^+ (h_n) +_{\ZZ_p}\phi_p^+ (\sigma_m)$ to the
first $m$ digits. Hence
$$\phi_p\left(\sum_{r=1}^{m} y_r p^{r-1}\right)= h_n \oplus_p^{\simp} \sigma_m.$$

For short we write
$$\xi(h_n,\sigma_m):= \phi_p(y_{m+1} p^m).$$
Note that $\phi_p^+ (\xi(h_n,\sigma_m))= y_{m+1} p^m$. Hence we
can write
$$h_n\oplus_p (\sigma_m +\delta)=\phi_{p}(\phi_p^+ (h_n) +_{\ZZ_p}\phi_p^+ (\sigma_m +\delta))=(h_n \oplus_p^{\simp} \sigma_m) + (\xi(h_n,\sigma_m)\oplus_p\delta).$$
From this we obtain
\begin{eqnarray*}
\lefteqn{p^m \int_{0}^{p^{-m}} 1+\frac{\gamma_1}{2}\left(1-(h_n\oplus_p (\sigma_m +\delta))^2\right)\rd \delta}\\
&=&p^m \int_{0}^{p^{-m}}
1+\frac{\gamma_1}{2}\left(1-((h_n\oplus_p^{\simp} \sigma_m)
+(\xi(h_n,\sigma_m)\oplus_p\delta))^2\right)\rd {\delta}.
\end{eqnarray*}
We now use \cite[Lemma~3]{HKP14}, which states that for any $f\in
L_2 ([0,1])$ and any $y\in[0,1)$, we have
\begin{equation}\label{le3HKP14}
\int_0^1 f(x)\rd x=\int_0^1 f(x\oplus_{p} y)\rd x.
\end{equation}
This yields
\begin{eqnarray*}
\lefteqn{p^m \int_{0}^{p^{-m}} 1+\frac{\gamma_1}{2}\left(1-(h_n\oplus_p (\sigma_m +\delta))^2\right)\rd \delta=}\\
&=&p^m \int_{0}^{p^{-m}} 1+\frac{\gamma_1}{2}\left(1-((h_n\oplus_p^{\simp} \sigma_m) +\delta)^2\right)\rd {\delta}\\
&=&1+\frac{\gamma_1}{2}\left(1-(h_n\oplus_p^{\simp}
\sigma_m)^2\right)-\frac{1}{p^{m}}\frac{\gamma_1}{2}
(h_n\oplus_p^{\simp} \sigma_m) -
\frac{1}{p^{2m}}\frac{\gamma_1}{6}.
\end{eqnarray*}
Furthermore, in a similar fashion,
\begin{eqnarray*}
\lefteqn{p^m \int_{0}^{p^{-m}} 1+\gamma_1\left(1-((h_n\oplus_p (\sigma_m +\delta))\right)\rd \delta=}\\
&=&p^m \int_{0}^{p^{-m}} 1+\gamma_1\left(1-((h_n\oplus_p^{\simp} \sigma_m) +(\xi(h_n,\sigma_m)\oplus_p\delta))\right)\rd\delta\\
&=&p^m \int_{0}^{p^{-m}} 1+\gamma_1\left(1-((h_n\oplus_p^{\simp} \sigma_m) +\delta)\right)\rd \delta\\
&=&-\frac{\gamma_1}{2}\frac{1}{p^m} +1 +\gamma_1 - \gamma_1
(h_n\oplus_p^{\simp} \sigma_m).
\end{eqnarray*}
Finally, let us deal with the expression
\begin{equation}\label{eqminint}
p^m \int_{0}^{p^{-m}} 1+\gamma_1 \min\left\{1-(h_n\oplus_p
(\sigma_m +\delta)),1-(h_k\oplus_p (\sigma_m +\delta))\right\}\rd
\delta
\end{equation}
with $k\neq n$. Note that, as $k\neq n$, we cannot have
$h_n\oplus_p (\sigma_m +\delta)=h_k\oplus_p (\sigma_m +\delta)$.
Suppose that
\begin{equation}\label{eqlower}
h_n\oplus_p (\sigma_m +\delta)<h_k\oplus_p (\sigma_m +\delta).
\end{equation}
Using the notation introduced above, we can
rewrite~\eqref{eqlower} as
$$(h_n\oplus_p^{\simp} \sigma_m) +(\xi(h_n,\sigma_m)\oplus_p\delta)<(h_k\oplus_p^{\simp} \sigma_m) +(\xi(h_k,\sigma_m)\oplus_p\delta).$$
Again, since $k\neq n$, we cannot have
$$(h_n\oplus_p^{\simp} \sigma_m)=(h_k\oplus_p^{\simp} \sigma_m),$$
as this would also imply $\xi(h_n,\sigma_m)=\xi(h_k,\sigma_m)$,
and would so yield a contradiction to~\eqref{eqlower}.
Furthermore, it cannot be the case that
$$(h_n\oplus_p^{\simp} \sigma_m)>(h_k\oplus_p^{\simp} \sigma_m),$$
since $\xi(h_n,\sigma_m),\xi(h_k,\sigma_m)\in [0,p^{-m})$, and so
we would also end up with a contradiction to~\eqref{eqlower}.
Therefore, we see that~\eqref{eqlower} automatically implies
\begin{equation}\label{eqlowersimp}
(h_n\oplus_p^{\simp} \sigma_m)<(h_k\oplus_p^{\simp} \sigma_m).
\end{equation}
Suppose now, on the other hand, that~\eqref{eqlowersimp} holds.
Then, since $\xi(h_n,\sigma_m),\xi(h_k,\sigma_m)\in [0,p^{-m})$,
also~\eqref{eqlower} must hold. We have thus shown
that~\eqref{eqlower} and~\eqref{eqlowersimp} are equivalent.

Suppose now in the analysis of~\eqref{eqminint}
that~\eqref{eqlower} holds, i.e.,
\begin{eqnarray*}
 \lefteqn{p^m \int_{0}^{p^{-m}} 1+\gamma_1
\min\left\{1-(h_n\oplus_p (\sigma_m +\delta)),1-(h_k\oplus_p (\sigma_m +\delta))\right\}\rd \delta=}\\
&=&p^m \int_{0}^{p^{-m}} 1+\gamma_1
\left(1-(h_k\oplus_p (\sigma_m +\delta))\right)\rd \delta\\
&=&p^m \int_{0}^{p^{-m}} 1+\gamma_1 \left(1-((h_k\oplus_p^{\simp}
\sigma_m) +(\xi(h_k,\sigma_m)\oplus_p\delta))\right)\rd \delta.
\end{eqnarray*}
Using the equivalence between~\eqref{eqlower}
and~\eqref{eqlowersimp}, and again \eqref{le3HKP14}, we see that
the latter expression equals
\begin{eqnarray*}
 \lefteqn{p^m \int_{0}^{p^{-m}} 1+\gamma_1\left(
\min\left\{1-(h_n\oplus_p^{\simp} \sigma_m),1-(h_k\oplus_p^{\simp} \sigma_m)\right\}-(\xi(h_k,\sigma_m)\oplus_p\delta)\right)\rd \delta}\\
&=&p^m \int_{0}^{p^{-m}} 1+\gamma_1
\left(\min\left\{1-(h_n\oplus_p^{\simp} \sigma_m),1-(h_k\oplus_p^{\simp} \sigma_m)\right\}-\delta\right)\rd \delta\\
&=&-\frac{\gamma_1}{2}\frac{1}{p^m} +1+\gamma_1
\min\left\{1-(h_n\oplus_p^{\simp} \sigma_m),1-(h_k\oplus_p^{\simp}
\sigma_m)\right\}.
\end{eqnarray*}
A similar argument holds if the converse of~\eqref{eqlower} holds.

Putting all of these observations together, we obtain
\begin{eqnarray*}
\lefteqn{p^m \int_{0}^{p^{-m}}
e^2_{N,1}(H_{p,N}\oplus_p(\sigma_m+\delta),K_{1,\gamma_1})\rd\delta= \left(1+\frac{\gamma_1}{3}\right)}\\
&&-\frac{2}{N}\sum_{n=0}^{N-1}
\left(1+\frac{\gamma_1}{2}\left(1-(h_n\oplus_p^{\simp}
\sigma_m)^2\right)-\frac{\gamma_1}{2}\frac{1}{p^{m}}
(h_n\oplus_p^{\simp} \sigma_m) - \frac{1}{p^{2m}}\frac{\gamma_1}{6}\right)\\
&&+\frac{1}{N^2}\sum_{n=0}^{N-1}\left(-\frac{\gamma_1}{2}\frac{1}{p^m} +1 +\gamma_1 - \gamma_1 (h_n\oplus_p^{\simp} \sigma_m)\right)\\
&&+\frac{1}{N^2}\sum_{\substack{n,k=0\\n\neq k}}^{N-1}
\left(-\frac{\gamma_1}{2}\frac{1}{p^m} +1+\gamma_1 \min\left\{1-(h_n\oplus_p^{\simp} \sigma_m),1-(h_k\oplus_p^{\simp} \sigma_m)\right\}\right)\\
&\ge&\left(1+\frac{\gamma_1}{3}\right)\\
&&-\frac{2}{N}\sum_{n=0}^{N-1}
\left(1+\frac{\gamma_1}{2}\left(1-(h_n\oplus_p^{\simp}
\sigma_m)^2\right)-\frac{\gamma_1}{2}\frac{1}{2p^{m}}
2(h_n\oplus_p^{\simp} \sigma_m) - \frac{\gamma_1}{2}\frac{1}{4p^{2m}}\right)\\
&&+\frac{1}{N^2}\sum_{n=0}^{N-1}\left(1 +\gamma_1 \left(1-\left(h_n\oplus_p^{\simp} \sigma_m+\frac{1}{2p^m}\right)\right)\right)\\
&&+\frac{1}{N^2}\sum_{\substack{n,k=0\\n\neq k}}^{N-1}
\left(1+\gamma_1 \min\left\{1-\left(h_n\oplus_p^{\simp} \sigma_m+\frac{1}{2p^m}\right),1-\left(h_k\oplus_p^{\simp} \sigma_m +\frac{1}{2p^m}\right)\right\}\right)\\
&=&\left(1+\frac{\gamma_1}{3}\right)-\frac{2}{N}\sum_{n=0}^{N-1}\left(1+\frac{\gamma_1}{2}\left(1-\left(h_n\oplus_p^{\simi} \sigma_m\right)^2\right)\right)\\
&&+\frac{1}{N^2}\sum_{n,k=0}^{N-1}
\left(1+\gamma_1 \min\left\{1-\left(h_n\oplus_p^{\simi} \sigma_m\right),1-\left(h_k\oplus_p^{\simi} \sigma_m\right)\right\}\right)\\
&=&e_{N,1}^2(H_{p,N}\oplus_p^{\simi}\sigma_m,K_{1,\gamma_1}).
\end{eqnarray*}
The result follows.
\end{proof}

For two point sets $X=\{\bsx_0,\bsx_1,\ldots,\bsx_{N-1}\}$ in
$[0,1)^{s_1}$ and $Y=\{\bsy_0,\bsy_1,\ldots,\bsy_{N-1}\}$ in
$[0,1)^{s_2}$ we write $(X,Y)$ to denote the point set consisting
of the concatenated points
$(\bsx_k,\bsy_k)=(x_{k,1},\ldots,x_{k,s_1},y_{k,1},\ldots,y_{k,s_2})$
for $k=0,1,\ldots,N-1$.

\begin{lem}\label{lemsimids}
Let $P_{s,N}$ be a point set of $N$ points in $[0,1)^s$. Let
$H_{p,N}$ be as in Lemma~\ref{lemsimid1} and let $m\in\NN$ be
minimal such that $N<p^m$. Furthermore, let $\sigma_m
\in\QQ(p^m)$. Then it is true that
$$e^2_{N,s+1}((P_{s,N},H_{p,N}\oplus_p ^{\simi}\sigma_m),K_{s+1,\bsgamma})\le p^m \int_{0}^{p^{-m}}
e^2_{N,s+1}((P_{s,N},H_{p,N}\oplus_p(\sigma_m+\delta)),K_{s+1,\bsgamma})\rd\delta.$$
\end{lem}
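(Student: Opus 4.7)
The approach is to imitate the proof of Lemma~\ref{lemsimid1} almost line-for-line, exploiting the fact that the integration variable $\delta$ only affects the last ($(s+1)$-th) coordinate of the concatenated point set. Consequently, in every product arising from the formula \eqref{fo_wc_err} applied to $(P_{s,N}, H_{p,N}\oplus_p(\sigma_m+\delta))$, the factors indexed by $i\in\{1,\ldots,s\}$ are constants with respect to $\delta$ and pull out of $p^m\int_0^{p^{-m}}\cdots\rd\delta$.

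First, I would write out the three terms of $p^m\int_0^{p^{-m}} e^2_{N,s+1}((P_{s,N},H_{p,N}\oplus_p(\sigma_m+\delta)),K_{s+1,\bsgamma})\rd\delta$ using \eqref{fo_wc_err}. For the single sum, the integrand splits as a product of a $\delta$-independent prefactor $\prod_{i=1}^s(1+\tfrac{\gamma_i}{2}(1-x_{n,i}^2))$ times the one-dimensional expression $1+\tfrac{\gamma_{s+1}}{2}(1-(h_n\oplus_p(\sigma_m+\delta))^2)$. For the double sum one factors out $\prod_{i=1}^s(1+\gamma_i\min(1-x_{n,i},1-x_{k,i}))$; the diagonal $n=k$ term gives the analogous factoring with $1+\gamma_i(1-x_{n,i})$.

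Second, to each of the resulting one-dimensional integrals in $\delta$ I would apply verbatim the computations carried out in the proof of Lemma~\ref{lemsimid1}: the decomposition $h_n\oplus_p(\sigma_m+\delta)=(h_n\oplus_p^{\simp}\sigma_m)+(\xi(h_n,\sigma_m)\oplus_p\delta)$, the invariance identity \eqref{le3HKP14}, and the equivalence between the orderings $h_n\oplus_p(\sigma_m+\delta)<h_k\oplus_p(\sigma_m+\delta)$ and $h_n\oplus_p^{\simp}\sigma_m<h_k\oplus_p^{\simp}\sigma_m$ (used to handle the $\min$ in the double sum). The arithmetic is identical to the one-dimensional case, so each integral evaluates in closed form to an expression in $h_n\oplus_p^{\simp}\sigma_m$ and $h_k\oplus_p^{\simp}\sigma_m$.

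Third, I would compare the resulting expression term-by-term with $e^2_{N,s+1}((P_{s,N},H_{p,N}\oplus_p^{\simi}\sigma_m),K_{s+1,\bsgamma})$, where the last coordinate becomes $(h_n\oplus_p^{\simp}\sigma_m)+\tfrac{1}{2p^m}$. Just as in Lemma~\ref{lemsimid1}, the contributions from the linear and min-type terms match exactly, and the only place where an inequality (rather than equality) enters is the first product, where $-\tfrac{\gamma_{s+1}}{6p^{2m}}\le -\tfrac{\gamma_{s+1}}{8p^{2m}}$ (this then appears with the sign $-2/N$, yielding the direction $\ge$). The only thing to check --- and this is the one step that is genuinely different from Lemma~\ref{lemsimid1} --- is that the prefactors we pulled out, namely $\prod_{i=1}^s(1+\tfrac{\gamma_i}{2}(1-x_{n,i}^2))$, $\prod_{i=1}^s(1+\gamma_i(1-x_{n,i}))$, and $\prod_{i=1}^s(1+\gamma_i\min(1-x_{n,i},1-x_{k,i}))$, are all non-negative, so that the one-dimensional inequality propagates. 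This is immediate since each factor lies in $[1,1+\gamma_i]$ because $P_{s,N}\subseteq[0,1)^s$. There is no real obstacle; the whole proof is bookkeeping organized around the observation that the extra product over the first $s$ coordinates behaves as a non-negative multiplier that preserves the sign of the key inequality established in Lemma~\ref{lemsimid1}.
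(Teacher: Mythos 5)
Your proposal is correct and is precisely the argument the paper intends: the published proof consists only of the remark that it is ``similar to that of Lemma~\ref{lemsimid1}'', and your write-up fills in exactly that similarity, correctly identifying the one genuinely new point (the $\delta$-independent prefactors over the first $s$ coordinates are non-negative, so the one-dimensional inequality from Lemma~\ref{lemsimid1} propagates through the sums).
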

\begin{proof}
The proof is similar to that of Lemma~\ref{lemsimid1}.
\end{proof}

\section{The CBC construction}\label{sec_CBC}

In this section, we analyze the following CBC construction of a
mid-simplified $\bsp$-adic shift to obtain $\bsp$-adically shifted
Halton sequences with a low integration error.

Throughout this section, let $s,N\in\NN$ be given and let
$\bsp=(p_1,\ldots,p_s)\in \PP^s$ with pairwise distinct components
$p_j$. For $j\in [s]$ let $m_j\in\NN$ be minimal such that
$N<p_j^{m_j}$. Let $H_{\bsp,N}$ be the point set consisting of the
first $N$ elements of $H_{\bsp}$. To stress the dependence of the
worst-case error on the $\bsp$-adic shift we write in the
following $$e_{N,s}(\bssigma):=e_{N,s}(H_{\bsp,N}
\oplus_{\bsp}^{\simi} \bssigma,K_{s,\bsgamma})$$ for $\bssigma \in
\QQ(p_1^{m_1})\times \cdots \times \QQ(p_s^{m_s})$.

We propose the following algorithm.
\begin{algorithm}\label{algcbc}
\begin{itemize}
  \item[(1)] Choose $\sigma_{1}\in\QQ(p_1^{m_1})$ to minimize
$e_{N,1}^2 (\sigma)$ as a function of $\sigma$. \item[(2)] For
$1\le d\le s-1$, assume that $\sigma_{1},\ldots,\sigma_{d}$ have
already been found. Choose $\sigma_{d+1}\in\QQ(p_{d+1}^{m_{d+1}})$
to minimize
\begin{equation}\label{CBC_err}
e_{N,d+1}^2 ((\sigma_1,\ldots,\sigma_d,\sigma))
\end{equation}
as a function of $\sigma$. \item[(3)] If $d \le s-1$ increase $d$
by 1 and go to Step 2, otherwise stop.
\end{itemize}
\end{algorithm}
\begin{rem}\rm
We remark that Algorithm~\ref{algcbc} makes the main result in~\cite{HKP14} much more explicit, 
as the algorithm only needs to check a countable number of possible candidates for the $\bsp$-adic shift.
A slight drawback of our method is that
the effective CBC construction of good $\bsp$-shifts has
a cost of $O(s^2N^3)$ operations, which is still large. Further improvements with respect 
to the construction cost are a demanding problem for future research.\\
\end{rem}

The following theorem states that Algorithm~\ref{algcbc} yields
$\bsp$-adically shifted Halton sequences with a low integration
error. Note that the error bound is of the same order as the one in Theorem~\ref{thmerrsob}.
\begin{thm}\label{thmcbc}
Let the notation be as above, and let $d\in
[s]$. Assume that $\bssigma_s=(\sigma_{1},\ldots,\sigma_{s})$ has
been constructed according to Algorithm~\ref{algcbc}. Let
$\bssigma_d:=(\sigma_1,\ldots,\sigma_d)$. Then
\begin{equation}\label{resultcbc}
 e_{N,d}^2 (\bssigma_d) \le \frac{1}{N^2}
\left(\prod_{j=1}^d \left(1+2\gamma_j (\log N) \frac{p_j^2}{\log
p_j}\right)+\prod_{j=1}^{d}(1+\gamma_j) \prod_{j=1}^d
\left(1+\frac{\gamma_j p_j}{6}\right)\right).
\end{equation}
\end{thm}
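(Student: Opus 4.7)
The plan is to proceed by induction on $d$, exploiting the CBC optimality and Lemma~\ref{lemsimids} to convert a minimum over the finite set of candidate shifts into an integral over a continuous shift in one variable, which is exactly the type of object analyzed in the proof of Theorem~\ref{thmerrsob}.

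For the base case $d=1$, Algorithm~\ref{algcbc} selects $\sigma_1 \in \QQ(p_1^{m_1})$ to minimize $e_{N,1}^2(\sigma)$. Since a minimum is at most an average, and Lemma~\ref{lemsimid1} together with the tiling of $[0,1)$ by the intervals $[\sigma, \sigma + p_1^{-m_1})$, $\sigma\in\QQ(p_1^{m_1})$, gives
$$e_{N,1}^2(\sigma_1) \le \frac{1}{p_1^{m_1}}\sum_{\sigma\in\QQ(p_1^{m_1})} e_{N,1}^2(\sigma) \le \int_0^1 e_{N,1}^2(H_{p_1,N}\oplus_{p_1}\tau, K_{1,\gamma_1})\, \rd\tau.$$
The right-hand side is $[\widehat{e}_{N,1}(H_{p_1},K_{1,\gamma_1})]^2$, which the one-dimensional specialization of Theorem~\ref{thmerrsob} bounds by the $d=1$ case of \eqref{resultcbc} (after absorbing the small constant loss).

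For the inductive step, assume \eqref{resultcbc} holds for $\bssigma_{d-1}$. By the CBC optimality of $\sigma_d$, combined with Lemma~\ref{lemsimids} applied with $P_{d-1,N} = H_{\bsp',N}\oplus_{\bsp'}^{\simi}\bssigma_{d-1}$ (where $\bsp' = (p_1,\ldots,p_{d-1})$) and $p = p_d$, followed by tiling of $[0,1)$, we obtain
$$e_{N,d}^2(\bssigma_d) \le \frac{1}{p_d^{m_d}}\sum_{\sigma \in \QQ(p_d^{m_d})} e_{N,d}^2(\bssigma_{d-1},\sigma) \le \int_0^1 \widetilde{e}_{N,d}^2(\bssigma_{d-1}, \tau)\, \rd\tau,$$
where $\widetilde{e}_{N,d}^2(\bssigma_{d-1},\tau)$ denotes the squared worst-case error with the mid-simplified shift $\bssigma_{d-1}$ in the first $d-1$ coordinates and an ordinary $p_d$-adic shift by $\tau$ in coordinate $d$. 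I would then insert the explicit formula~\eqref{fo_wc_err} and use the tensor-product structure of the kernel to perform the $\tau$-integration factor by factor: by the shift-invariance identity~\eqref{le3HKP14}, the $d$-th factor in every summand collapses to explicit constants (namely $1 + \gamma_d/3$ from the diagonal contributions), while the cross terms contribute a factor of order $\log N \cdot p_d^2/\log p_d$ as in the HKP14 proof of Theorem~\ref{thmerrsob} specialized to a single coordinate.

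The key structural claim is that after this $\tau$-integration the resulting expression is bounded by $e_{N,d-1}^2(\bssigma_{d-1})$ times a factor of the form $(1 + 2\gamma_d (\log N) p_d^2/\log p_d)$ for the main term, and by $\prod_{j=1}^{d-1}(1+\gamma_j)(1+\gamma_j p_j/6)$ times $(1+\gamma_d)(1+\gamma_d p_d/6)$ for the lower-order term; applying the inductive hypothesis then delivers~\eqref{resultcbc} for $d$. The factor $2$ in the main term and the replacement $1+\gamma_j/2 \rightsquigarrow 1+\gamma_j$ (compared to~\eqref{rmsbd}) are the constant losses coming from the $1/(2p_d^{m_d})$ offset in the mid-simplified shift and from the inequality in Lemma~\ref{lemsimids}. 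The main obstacle is precisely this bookkeeping step: one must show that the partially-integrated expression can actually be repackaged as (or bounded by) a $(d-1)$-dimensional squared worst-case error for the mid-simplified shift $\bssigma_{d-1}$, so that the inductive hypothesis applies. This hinges on the fact that the tensor product structure of $K_{s,\bsgamma}$ survives integration in a single variable, and on carefully tracking the signs and magnitudes of the offset terms introduced by the mid-shift.
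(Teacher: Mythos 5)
Your proposal follows essentially the same route as the paper's proof: induction on $d$, bounding the CBC minimum by the average over $\QQ(p_d^{m_d})$, invoking Lemma~\ref{lemsimids} to pass to an integral over a continuous shift in the new coordinate, and then decomposing the integrated error into $(1+\gamma_d/3)$ times the $(d-1)$-dimensional squared error plus a cross-term remainder controlled by the exponential-sum estimate from the proof of Theorem~\ref{thmerrsob}. The only inaccuracy is in the bookkeeping commentary: the doubling of the $\log N$ factor and the replacement $1+\gamma_j/2\rightsquigarrow 1+\gamma_j$ come from adding the remainder term to the inductively bounded main term and from estimating $\min\{1-x_{n,j},1-x_{k,j}\}\le 1$ inside the remainder, not from the $1/(2p_d^{m_d})$ offset of the mid-simplified shift.
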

\begin{proof}
 We show the result by induction on $d$. For $d=1$ we have
\begin{eqnarray*}
\lefteqn{\int_0^1 e_{N,1}^2
(H_{p_1,N}\oplus_{p_1}\sigma,K_{1,\gamma_1})\rd\sigma}\\ & = &
\frac{1}{p_1^{m_1}}
\sum_{\ell=0}^{p_1^{m_1}-1}p_1^{m_1}\int_{\ell/p_1^{m_1}}^{(\ell+1)/p_1^{m_1}}
e_{N,1}^2 \left(H_{p_1,N}\oplus_{p_1}\left(\frac{\ell}{p_1^{m_1}}+\delta\right),K_{1,\gamma_1}\right)\rd\delta\\
& \ge & \frac{1}{p_1^{m_1}} \sum_{\ell=0}^{p_1^{m_1}-1} e_{N,1}^2
\left(\frac{\ell}{p_1^{m_1}}\right),
\end{eqnarray*}
where we applied Lemma~\ref{lemsimid1}. Hence there exists a
$\sigma'_1\in \QQ(p_1^{m_1})$ such that
\begin{eqnarray*}
 e_{N,1}^2 (\sigma'_1)&\le& \int_0^1 e_{N,1}^2 (H_{p_1,N}\oplus_{p_1}\sigma,K_{1,\gamma_1})\rd\sigma\\
&\le& \frac{1}{N^2}\left(1+2\gamma_1 (\log N) \frac{p_1^2}{\log
p_1}\right)+(1+\gamma_1)\left(1+\frac{\gamma_1 p_1}{6}\right),
\end{eqnarray*}
where we used \cite[Theorem~1]{HKP14} for the second inequality.
Since $\sigma_{1}$ is chosen by Algorithm~\ref{algcbc} to minimize
$e_{N,1}^2 (\sigma)$, it follows that the result holds for $d=1$.

Suppose the result has already been shown for some fixed $d\in
[s-1]$. Assume that $\bssigma_{d}=(\sigma_1,\ldots,\sigma_d)$ has
been obtained by the CBC algorithm. Since $\sigma_{d+1}$ is chosen
in order to minimize the squared error \eqref{CBC_err}, we have
(where we write with some abuse of notation
$(\bssigma_d,\sigma_{d+1}):=(\sigma_1,\ldots,\sigma_d,\sigma_{d+1})$)
$$e_{N,d+1}^2 ((\bssigma_d,\sigma_{d+1}))
\le \frac{1}{p_{d+1}^{m_{d+1}}}
\sum_{v=0}^{p_{d+1}^{m_{d+1}}-1} e_{N,d+1}^2
\left(\left(\bssigma_{d},\frac{v}{p_{d+1}^{m_{d+1}}}\right)\right).$$
Using Lemma~\ref{lemsimids}, we now see that, for any
$v\in\{0,\ldots,p_{d+1}^{m_{d+1}}-1\}$,
\begin{multline*}
e_{N,d+1}^2 \left(\left(\bssigma_{d},\frac{v}{p_{d+1}^{m_{d+1}}}\right)\right)\\
\le p_{d+1}^{m_{d+1}} \int_0^{p_{d+1}^{-m_{d+1}}} e_{N,d+1}^2
\left(\left(H_{\bsp_d,N}\oplus_{\bsp}^{\simi}
\bssigma_{d},H_{p_{d+1},N}\oplus_{p_{d+1}}\left(\frac{v}{p_{d+1}^{m_{d+1}}}+\delta\right)\right),K_{d+1,\bsgamma}\right)\rd\delta,
\end{multline*}
where $\bsp_d:=(p_1,\ldots,p_d)$, and hence
$$
e_{N,d+1}^2 ((\bssigma_d,\sigma_{d+1}))\le \int_0^1  e_{N,d+1}^2
((H_{\bsp_d,N}\oplus_{\bsp}^{\simi}
\bssigma_{d},H_{p_{d+1},N}\oplus_{p_{d+1}}\sigma),K_{d+1,\bsgamma})
\rd\sigma.
$$
We denote the points of $H_{\bsp_d,N}\oplus_{\bsp}^{\simi}
\bssigma_{d}$ by $\bsx_n=(x_{n,1},\ldots,x_{n,d})$, and the points
of $H_{p_{d+1},N}$ by $h_{n}$. Due to \eqref{fo_wc_err}, we obtain
\begin{eqnarray*}
 \lefteqn{\int_0^1  e_{N,d+1}^2 ((H_{\bsp_d,N}\oplus_{\bsp}^{\simi} \bssigma_{\bsm},H_{p_{d+1},N}\oplus_{p_{d+1}}\sigma),K_{d+1,\bsgamma}) \rd\sigma=
\prod_{j=1}^{d+1}\left(1+\frac{\gamma_j}{3}\right)}\\
&&-\frac{2}{N}\sum_{n=0}^{N-1}\left[\prod_{j=1}^d
\left(1+\frac{\gamma_j}{2}(1-x_{n,j}^2)\right)\right]
\int_0^1 \left(1+\frac{\gamma_{d+1}}{2}(1-(h_n\oplus_{p_{d+1}} \sigma)^2)\right)\rd\sigma\\
&&+\frac{1}{N^2}\sum_{n,k=0}^{N-1}\left[\prod_{j=1}^d \left(1+\gamma_j \min\{1-x_{n,j},1-x_{k,j}\}\right)\right]\\
&&\hspace{1cm}\times\int_0^1
\left(1+\gamma_{d+1}\min\{1-(h_n\oplus_{p_{d+1}}\sigma),1-(h_k\oplus_{p_{d+1}}\sigma)\}\right)\rd\sigma.
\end{eqnarray*}
Let now
$$I_1:=\int_0^1 \left(1+\frac{\gamma_{d+1}}{2}(1-(h_n\oplus_{p_{d+1}} \sigma)^2)\right)\rd\sigma,$$
and
$$I_2:=\int_0^1 \left(1+\gamma_{d+1}\min\{1-(h_n\oplus_{p_{d+1}}\sigma),1-(h_k\oplus_{p_{d+1}}\sigma)\}\right)\rd\sigma.$$

Using \eqref{le3HKP14}, we obtain
$$I_1=\int_0^1 \left(1+\frac{\gamma_{d+1}}{2}(1-\sigma^2)\right)\rd\sigma =1+\frac{\gamma_{d+1}}{3}.$$

Let us now deal with $I_2$. 
$$I_2=\sum_{\ell=0}^\infty r_{p_{d+1},\gamma_{d+1}}(\ell)\beta_{\ell}(h_n)\overline{\beta_{\ell} (h_k)},$$
where for $\ell=\ell_{a-1}p_{d+1}^{a-1}+\cdots + \ell_1
p_{d+1}+\ell_0$ with $\ell_{a-1}\neq 0$ we have
$$
r_{p_{d+1},\gamma_{d+1}}=\begin{cases}
                          1+\frac{\gamma_{d+1}}{3} & \mbox{if $\ell=0$,}\\
              \frac{\gamma_{d+1}}{2p_{d+1}^a}\left(\frac{1}{\sin^2(\ell_{a-1}\pi/p_{d+1})}-\frac{1}{3}\right) & \mbox{if $\ell\neq 0$.}
                         \end{cases}
$$
Altogether, we obtain
\begin{eqnarray}\label{eqdd+1}
\lefteqn{e_{N,d+1}^2 ((\bssigma_d,\sigma_{d+1}))}\nonumber\\
&\le&\prod_{j=1}^{d+1} \left(1+\frac{\gamma_j}{3}\right)
-\frac{2}{N}\sum_{n=0}^{N-1}\left[\prod_{j=1}^d \left(1+\frac{\gamma_j}{2}(1-x_{n,j}^2)\right)\right]\left(1+\frac{\gamma_{d+1}}{3}\right)\nonumber\\
&&+\frac{1}{N^2}\sum_{n,k=0}^{N-1}\left[\prod_{j=1}^d
\left(1+\gamma_j \min\{1-x_{n,j},1-x_{k,j}\}\right)\right]
\sum_{\ell=0}^\infty r_{p_{d+1},\gamma_{d+1}}(\ell)\beta_{\ell}(h_n)\overline{\beta_{\ell} (h_k)}\nonumber\\
&=&\left(1+\frac{\gamma_{d+1}}{3}\right) \left[\prod_{j=1}^d \left(1+\frac{\gamma_j}{3}\right)-\frac{2}{N}\sum_{n=0}^{N-1}\prod_{j=1}^d \left(1+\frac{\gamma_j}{2}(1-x_{n,j}^2)\right)\right.\nonumber\\
&& \hspace{2cm}\left. +\frac{1}{N^2}\sum_{n,k=0}^{N-1}\prod_{j=1}^d \left(1+\gamma_j \min\{1-x_{n,j},1-x_{k,j}\}\right)\right]\nonumber\\
&&+\frac{1}{N^2}\sum_{n,k=0}^{N-1}\left(\prod_{j=1}^d
\left(1+\gamma_j \min\{1-x_{n,j},1-x_{k,j}\}\right)\right)
\sum_{\ell=1}^\infty r_{p_{d+1},\gamma_{d+1}}(\ell)\beta_{\ell}(h_n)\overline{\beta_{\ell} (h_k)}\nonumber\nonumber\\
&=&\left(1+\frac{\gamma_{d+1}}{3}\right)e_{N,d}^2(\bssigma_d)+T,
\end{eqnarray}
where
$$T:=\frac{1}{N^2}\sum_{n,k=0}^{N-1}\left(\prod_{j=1}^d \left(1+\gamma_j \min\{1-x_{n,j},1-x_{k,j}\}\right)\right)
\sum_{\ell=1}^\infty
r_{p_{d+1},\gamma_{d+1}}(\ell)\beta_{\ell}(h_n)\overline{\beta_{\ell}
(h_k)}.$$ Since $\min\{1-x_{n,j},1-x_{k,j}\} \le 1$ we obviously
have
\begin{eqnarray}\label{Tbd}
 T \le \left(\prod_{j=1}^d (1+\gamma_j)\right)\sum_{\ell=1}^\infty r_{p_{d+1},\gamma_{d+1}}(\ell)\abs{\frac{1}{N}\sum_{n=0}^{N-1}\beta_{\ell}(h_n)}^2.
\end{eqnarray}
From the proof of \cite[Theorem 1]{HKP14}, it can easily be
derived that
$$\sum_{\ell=1}^\infty r_{p_{d+1},\gamma_{d+1}}(\ell)\abs{\frac{1}{N}\sum_{n=0}^{N-1}\beta_{\ell}(h_n)}^2 \le 
\frac{1}{N^2}\frac{\gamma_{d+1}g p_{d+1}^2}{2} + \frac{\gamma_{d+1}}{6 p_{d+1}^g} \left(1+\frac{\gamma_{d+1}}{2}\right),$$
for arbitrarily chosen $g\in\NN$. By choosing $g=\lfloor
2\log_{p_{d+1}} N\rfloor$ and inserting into \eqref{Tbd}, we arrive
at
\begin{eqnarray}\label{eqT}
 T&\le& \frac{1}{N^2}\prod_{j=1}^d (1+\gamma_j)
\left(\left(\gamma_{d+1}(\log N) \frac{p_{d+1}^2}{\log p_{d+1}}\right)+\frac{\gamma_{d+1}p_{d+1}}{6} \left(1+\frac{\gamma_{d+1}}{2}\right)\right)\nonumber\\
&\le& \frac{1}{N^2}\left(\left(\gamma_{d+1}(\log N) \frac{p_{d+1}^2}{\log p_{d+1}}\right) \prod_{j=1}^d \left(1+2\gamma_j(\log N)\frac{p_j^2}{\log p_j}\right)\right.\nonumber\\
&&\hspace{1cm}\left.+ \frac{\gamma_{d+1}p_{d+1}}{6}
\prod_{j=1}^{d+1} (1+\gamma_j) \prod_{j=1}^d
\left(1+\frac{\gamma_j p_j}{6}\right)\right).
\end{eqnarray}
On the other hand, we have, using the induction assumption,
\begin{eqnarray}\label{eqinduction}
\lefteqn{\left(1+\frac{\gamma_{d+1}}{3}\right)e_{N,d}^2 (\bssigma_{d})}\nonumber\\
& \le & \left(1+\frac{\gamma_{d+1}}{3}\right)\frac{1}{N^2}
\left(\prod_{j=1}^d \left(1+2\gamma_j (\log N) \frac{p_j^2}{\log
p_j}\right)+\prod_{j=1}^{d}(1+\gamma_j)
\prod_{j=1}^d \left(1+\frac{\gamma_j p_j}{6}\right)\right)\nonumber\\
&\le& \frac{1}{N^2}\left(\left(1+\gamma_{d+1}(\log N) \frac{p_{d+1}^2}{\log p_{d+1}}\right) \prod_{j=1}^d \left(1+2\gamma_j(\log N)\frac{p_j^2}{\log p_j}\right)\right.\nonumber\\
&& \hspace{1cm}\left. + \prod_{j=1}^{d+1} (1+\gamma_j)
\prod_{j=1}^d \left(1+\frac{\gamma_j p_j}{6}\right)\right).
\end{eqnarray}
Combining equations~\eqref{eqT} and~\eqref{eqinduction}, and
inserting into~\eqref{eqdd+1}, we obtain
\begin{eqnarray*}
e_{N,d+1}^2 ((\bssigma_d,\sigma_{d+1}))
 \le \frac{1}{N^2}\left(\prod_{j=1}^{d+1} \left(1+2\gamma_j(\log N)\frac{p_j^2}{\log p_j}\right)+\prod_{j=1}^{d+1} (1+\gamma_j) \prod_{j=1}^{d+1} \left(1+\frac{\gamma_j p_j}{6}\right)\right).
\end{eqnarray*}
This is the result for $d+1$, and the theorem is shown.
\end{proof}

\begin{small}
\noindent\textbf{Authors' addresses:}
\noindent Peter Kritzer, Friedrich Pillichshammer\\
Department of Financial Mathematics and Applied Number Theory\\ 
Johannes Kepler University Linz\\
Altenbergerstr.~69, 4040 Linz, Austria\\
E-mail: \texttt{peter.kritzer@jku.at},
\texttt{friedrich.pillichshammer@jku.at}.
\end{small}

\end{document}